\newtheorem{theorem}{Theorem}
\newtheorem{lemma}[theorem]{Lemma}
\newtheorem{example}[theorem]{Example}
\theoremstyle{remark}
\newtheorem*{remark}{Remark}
\theoremstyle{definition}
\DeclareMathOperator{\tr}{tr}
\DeclareMathOperator{\sign}{sign}
\DeclareMathOperator{\imm}{imm}
\DeclareMathOperator{\per}{per}
\newcommand{\ot}[0]{\otimes}
\newcommand{\nn}[0]{\nonumber}
\newcommand{\R}{\mathds{R}}
\newcommand{\C}{\mathds{C}}
\newcommand{\one}[0]{\mathds{1}}
\newcommand{\bra}[1]{\mathinner{\langle #1|}}	
\newcommand{\ket}[1]{\mathinner{|#1\rangle}}
\newcommand{\braket}[2]{\mathinner{\langle #1|#2\rangle}}
\newcommand{\dyad}[1]{| #1\rangle \langle #1|}  	
\renewcommand{\a}{\alpha}
\renewcommand{\b}{\beta}
\newcommand{\overbar}[1]{\mkern 1.5mu\overline{\mkern-1.5mu#1\mkern-1.5mu}\mkern 1.5mu} 
\begin{document}
\title  {Matrix forms of immanant inequalities}
\date   {\today}

\author {Felix Huber}
\address{
Felix Huber,
Atomic Optics Department, 
Jagiellonian University, 
30-348 Krak\'{o}w, 
Poland, 
e-mail: felix.huber@uj.edu.pl}

\author {Hans Maassen}
\address{
Hans Maassen,
Department of Mathematics,
Radboud University, 
6525 AJ Nijmegen,
The Netherlands}

\thanks{
FH acknowledges support by
the Government of Spain (FIS2020-TRANQI and Severo Ochoa CEX2019-000910-S), 
Fundació Mir-Puig, 
Generalitat de Catalunya (AGAUR SGR 1381 and CERCA),
the European Union under Horizon2020 (PROBIST 754510),
and the Foundation for Polish Science through TEAM-NET (POIR.04.04.00-00-17C1/18-00).}

\maketitle

\noindent {\bf Abstract:} 
We translate inequalities and conjectures for immanants and generalized matrix functions into inequalities in the Löwner order.
These have the form of trace polynomials and generalize the inequalities from [FH, J. Math. Phys. 62 (2021), 2, 022203].

\section{Immanants}

Let $A = (a_{ij})$ be a complex $n\times n$ matrix. The determinant and permanent of $A$ are given by 
\begin{align}
 \det (A) &= \sum_{\sigma \in S_n} \sign(\sigma) \prod_{t=1}^n a_{t \sigma(t)} \nonumber\\
 \per (A) &= \sum_{\sigma \in S_n} \prod_{t=1}^n a_{t \sigma(t)}\,. \nn
\end{align}

A generalization of these functions are {\em immanants}. 
Given an irreducible character $\chi$ of the symmetric group $S_n$, the corresponding immanant is defined as
\begin{equation}\label{eq:immanant}
 \imm_\chi (A) = \sum_{\sigma \in S_n} \chi (\sigma) \prod_{t=1}^n a_{t \sigma(t)}\,.
\end{equation}
Now one sees that the determinant and the permanent arise 
when $\chi(\sigma) = \sign(\sigma)$  and $\chi(\sigma) = 1$ are the alternating and trivial representations respectively.

This can be generalized further: given a subgroup $H$ of $S_n$ and $\chi$ a character on~$H$,
one can define a {\em generalized matrix function}~\cite{MarcusMinc1965, Merris_MultilinearAlgebra1997}
\begin{equation}\label{eq:genmatfun}
 d^H_\chi(A) = \sum_{\sigma \in H} \chi(\sigma) \prod_{t=1}^n a_{t \sigma(t)}\,,
\end{equation}
where the sum is now over the subgroup $H$ instead of $S_n$. Note that 
$\det$, $\per$, and $\imm_\chi$ are special cases of the function $d^H_\chi$.

\bigskip
It is well known that, when $A$ is positive semidefinite, 
all the above quantities are non-negative
\begin{equation}\label{eq:perdetPSD}
d^H_\chi(A) \geq 0\,.
\end{equation}

Many more inequalities are known. A short selection appears in Appendix~\ref{app:ineqs}, 
a larger overview can be found in Chapter $7$ of Ref.~\cite{Merris_MultilinearAlgebra1997}.

\bigskip
We write $A\geq B$ whenever $A-B$ is positive semidefinite; this is the Löwner order.
The aim of this note is to turn {\em scalar} inequalities for positive semidefinite operators, 
such as e.g. appearing in Formula~\eqref{eq:perdetPSD}, 
into {\em matrix} inequalities in the Löwner order.

\section{Mapping to tensor products}
To approach this subject, it is convenient to consider any given positive semidefinite matrix~$A$ 
as the Gram matrix $A = (a_{ij})$ of some set of vectors $\{\ket{v_1}, \dots, \ket{v_n}\}$. 
So we write
\begin{equation}\label{eq:Gram}
A=
\begin{pmatrix}
  \braket{v_1}{v_1} & \braket{v_1}{v_2} &\hdots &\braket{v_1}{v_n} \\
  \braket{v_2}{v_1} & \braket{v_2}{v_2} &       & \vdots \\
  \vdots            &                   & \ddots \\
  \braket{v_n}{v_1} & \hdots                   &       & \braket{v_n}{v_n}
 \end{pmatrix}
\end{equation}
where $\ket{v_1},\dots, \ket{v_n}$ are complex vectors 
in a sufficiently large Hilbert space~$\C^m$. 
Every Gram matrix is positive semidefinite,
and in turn, every positive semidefinite
matrix A can be written as the Gram matrix of some set of vectors, which is
unique up to unitary equivalence. 
From now we call them {\em Gram vectors} for $A$. The
canonical choice will be through the matrix square root: 
for every matrix $A\geq 0$ there is a unique matrix $B\geq 0$
such that $A = B^2$. Then the columns of $B$ can be used as Gram vectors for $A$.

\smallskip
Now when $A\geq 0$ is thus understood as a Gram matrix 
one can relate immanants and generalised matrix functions of $A$ with an expression 
on a tensor product space involving its Gram vectors.
 
For this we consider the action of the symmetric group on a tensor product space. 
Namely, let $\sigma \in S_n$ act on 
$(\C^m)^{\otimes n} = \C^m \otimes \dots \otimes \C^m$ ($n$ times) 
by permuting its tensor factors,
\begin{equation}
 \sigma \ket{v_1} \ot \dots \ot \ket{v_n} = 
 \ket{v_{\sigma^{-1}(1)}} 
 \ot \dots \ot 
 \ket{v_{\sigma^{-1}(n)}}\,.\nn
\end{equation}
For example,
$
 (143)(2) \ket{v_1} \ot \ket{v_2} \ot \ket{v_3} \ot \ket{v_4}
       = \ket{v_3} \ot \ket{v_2} \ot \ket{v_4} \ot \ket{v_1}\,.
$

\smallskip

Our aim is to express the generalized matrix function $d^H_\chi$ as a multilinear function on a tensor product space;
see also Theorem $7.26$ in Ref.~\cite{Merris_MultilinearAlgebra1997} and Ref.~\cite{MaassenKuemmerer2019} 
which take such an approach.
\begin{lemma}\label{prop:gmf_gram}
 Let $A\geq 0$ with Gram vectors $\ket{v_1}, \dots, \ket{v_n}$
 and define $X_i = \dyad{v_i}$, $1\leq i \leq n$. Then
 
\begin{equation}\label{eq:d_as_projection}
 d^H_\chi(A) = \sum_{\sigma \in H} \chi(\sigma) \tr\big [\sigma^{-1} \, X_1 \ot \dots \ot X_n\big]\nn
\end{equation} 
\end{lemma}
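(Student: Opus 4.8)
The plan is to reduce the claimed identity to a term-by-term comparison with the definition \eqref{eq:genmatfun} of $d^H_\chi$. Since both sides are linear in the coefficients $\chi(\sigma)$ and run over the same index set $H$, it suffices to prove that for every $\sigma \in S_n$ one has $\tr\big[\sigma^{-1}\, X_1 \ot \dots \ot X_n\big] = \prod_{t=1}^n a_{t\sigma(t)}$.

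First I would observe that because each $X_i = \dyad{v_i}$ is a rank-one operator, the tensor product factorizes as $X_1 \ot \dots \ot X_n = \dyad{w}$ with $\ket{w} = \ket{v_1}\ot \dots \ot \ket{v_n} \in (\C^m)^{\ot n}$. Using the elementary identity $\tr[M\,\dyad{w}] = \expval{M}{w}$, valid for any operator $M$, this turns the left-hand side into $\expval{\sigma^{-1}}{w}$.

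Next I would evaluate $\sigma^{-1}\ket{w}$ using the defining action of $S_n$ on $(\C^m)^{\ot n}$: substituting $\sigma^{-1}$ for $\sigma$ in that formula and using $(\sigma^{-1})^{-1} = \sigma$ gives $\sigma^{-1}\ket{w} = \ket{v_{\sigma(1)}} \ot \dots \ot \ket{v_{\sigma(n)}}$. Taking the inner product with $\ket{w}$ and using that inner products on a tensor product factor over the tensor legs yields
\[
 \expval{\sigma^{-1}}{w} = \prod_{t=1}^n \braket{v_t}{v_{\sigma(t)}} = \prod_{t=1}^n a_{t\sigma(t)},
\]
where the last equality is just the definition \eqref{eq:Gram} of the Gram matrix. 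Multiplying by $\chi(\sigma)$ and summing over $\sigma \in H$ then gives exactly $d^H_\chi(A)$, as claimed.

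There is no real obstacle here; the computation is direct. The one point that deserves attention is the placement of the inverse: it is the action of $\sigma^{-1}$ (not $\sigma$) on $\ket{w}$ that produces the vector $\ket{v_{\sigma(1)}}\ot\dots\ot\ket{v_{\sigma(n)}}$ whose overlap with $\ket{w}$ reproduces the product $\prod_t a_{t\sigma(t)}$ appearing in \eqref{eq:genmatfun}. Hence the inverse in the trace is essential and must be tracked carefully through the group-action convention fixed above.
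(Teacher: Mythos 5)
Your proof is correct and follows essentially the same route as the paper: both reduce the identity to the single computation $\tr[\sigma^{-1}\,X_1\ot\dots\ot X_n] = \braket{w}{\sigma^{-1}w} = \prod_{t}\braket{v_t}{v_{\sigma(t)}} = \prod_t a_{t\sigma(t)}$ with $\ket{w}=\ket{v_1}\ot\dots\ot\ket{v_n}$, and your careful tracking of the inverse through the group-action convention matches the paper's (implicit) handling of the same point.
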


\begin{proof}
Put $v := v_1 \ot \dots \ot v_n$. Then 
\begin{align*}
 \prod_{t=1}^n a_{t \sigma(t)} 
    &= \prod_{t=1}^n \braket{v_t}{v_{\sigma(t)}} 
     = \braket{v}{\sigma^{-1} v} \\
    &= \tr(\sigma^{-1} \dyad{v}) = \tr( \sigma^{-1} X_1 \ot \dots \ot X_n)\,.
\end{align*}
The statement follows from the definition~\eqref{eq:genmatfun} of $d^H_\chi$.
\end{proof}

Now it is interesting to understand the immanant of positive semidefinite matrices
as arising from projectors:
let $H$ be subgroup of $S_n$.
The centrally primitive idempotent in the group ring $\C[H]$ associated with a character $\chi$ is~\cite[Theorem 3.6.2]{webb_2016}
\begin{equation}\label{eq:proj_irrep}
 p^H_\chi = \frac{\chi(e)}{|H|} \sum_{\sigma \in H} \chi(\sigma) \sigma^{-1} \nn\,.
\end{equation} 
One has that $(p^H_\chi)^2= p^H_\chi$. Under a representation,
$p^H_\chi$ is projects onto an isotypic component 
and thus $p^H_\chi \geq 0$.
In terms of the above proof it is then immediate that Eq.~\eqref{eq:perdetPSD} holds:
\begin{equation}\label{eq:imm_ineq}
 d_\chi^H(A) = \frac{|H|}{\chi(e)}\braket{v}{p_\chi^H v} = \frac{|H|}{\chi(e)} \arrowvert\arrowvert p_\chi^H v\arrowvert\arrowvert^2\geq 0\,.
\end{equation}

\section{Trace polynomial inequalities}
Each summand in Lemma~\ref{prop:gmf_gram} is linear in the $X_i$'s.
This allows to turn scalar inequalities for immanants into inequalities in terms of the Löwner order. 

For this we require a canonical ordering when writing permutations.
Given a permutation decomposed into cycles as $\sigma = \sigma_1 \dots \sigma_l \in S_n$, 
its canonical order is such that its largest element appears at the end of the last cycle, $\sigma_l = (\dots n)$.
Now given a set of complex matrices $X_1, \dots, X_n$ of equal size, 
we define for every permutation $(\a_1 \dots \alpha_r) \dots (\zeta_1 \dots \zeta_t)(\xi_1  \dots \xi_{w} n)$ 
the scalar
\begin{align*}
 T_\sigma &= \tr(X_{\a_1} \dots X_{\alpha_r}) \cdots \tr(X_{\zeta_1} \dots X_{\zeta_t}) 
 \tr(X_{\xi_1} \cdots X_{\xi_\upsilon} X_{n}) \,.
\intertext{and the matrix}
 \widetilde T_\sigma  &= \tr(X_{\a_1} \dots X_{\alpha_r}) \cdots \tr(X_{\zeta_1} \dots X_{\zeta_t}) X_{\xi_1} \cdots X_{\xi_\upsilon}\one \,,
\end{align*}
where the last matrix $X_n$ is replaced by $\one$ and its trace opened.
It is clear that $T_\sigma = \tr(\widetilde T_\sigma X_n)$ holds.
Furthermore, it is well-known that~\cite{Kostant2009}
\begin{equation*}\label{eq:perm_to_mult}
 T_\sigma = \tr(\sigma^{-1} X_1 \ot \dots \ot X_n)
\end{equation*}

\bigskip
Now suppose one is given a function $f:S_n \to \C$. To it we associate, on the one hand, the generalized matrix function
$d_f:M_n\to \C$,
\begin{equation*}
 d_f(A) = \sum_{\sigma \in S_n} f(\sigma) \prod_{t=1}^n a_{t\sigma(t)}\,.
\end{equation*}

On the other hand, we define for complex $m \times m$ matrices $X_1, \dots, X_{n-1}$ the matrix-valued map
$\widetilde{d}_f:M_m^{n-1} \to M_m$,
\begin{equation*}
\widetilde d_f(X_1, \dots, X_{n-1}) = \sum_{\sigma \in S_n} f(\sigma) \widetilde{T}_\sigma\,.
\end{equation*}

Starting from an inequality for some $d_f$ a corresponding matrix inequality for $\widetilde d_f$ can be made.

\begin{theorem}\label{thm:gmf}
 Suppose for all positive semidefinite $n \times n$ matrices $A$ it holds that
 \begin{equation*}
  d_f(A) \geq 0\,.
 \end{equation*}
Then for all positive semidefinite matrices $X_1, \dots, X_{n-1}$ of equal size it holds that
\begin{equation*}\label{eq:pCH}
  \widetilde{d}_f(X_1, \dots, X_{n-1}) \geq 0 \,.
\end{equation*}
\end{theorem}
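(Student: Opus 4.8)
The plan is to combine the Gram-vector representation of Lemma~\ref{prop:gmf_gram} with multilinearity of the trace in the tensor factors, and to test the matrix $\widetilde{d}_f(X_1, \dots, X_{n-1})$ against rank-one projectors. First I would fix positive semidefinite $m \times m$ matrices $X_1, \dots, X_{n-1}$ and an arbitrary vector $\ket{w} \in \C^m$, and reduce the claim to the scalar inequality $\bra{w}\widetilde{d}_f(X_1, \dots, X_{n-1})\ket{w} \geq 0$. Since this is required for every $\ket{w}$, the quadratic form of $\widetilde{d}_f(X_1, \dots, X_{n-1})$ is real-valued, which over $\C$ already forces the matrix to be Hermitian; non-negativity of the form is then exactly the Löwner-order statement $\widetilde{d}_f(X_1, \dots, X_{n-1}) \geq 0$.

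Next I would put $X_n := \dyad{w}$. This is legitimate because the canonical ordering places $n$ at the end of the last cycle, so $\widetilde{T}_\sigma$, and hence $\widetilde{d}_f(X_1, \dots, X_{n-1})$, only involves $X_1, \dots, X_{n-1}$; the slot $X_n$ is a genuinely free variable. Using the identity $T_\sigma = \tr(\widetilde{T}_\sigma X_n)$ together with $T_\sigma = \tr(\sigma^{-1}\, X_1 \ot \dots \ot X_n)$ one obtains
\begin{equation*}
 \bra{w}\widetilde{d}_f(X_1, \dots, X_{n-1})\ket{w}
  = \tr\!\big(\widetilde{d}_f(X_1, \dots, X_{n-1})\, X_n\big)
  = \sum_{\sigma \in S_n} f(\sigma)\, \tr(\sigma^{-1}\, X_1 \ot \dots \ot X_n)\,.
\end{equation*}

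The last step is to pass from general positive semidefinite $X_i$ to rank-one ones. I would write each $X_i$ in spectral form $X_i = \sum_k \dyad{v_i^{(k)}}$, a sum of rank-one projectors onto the eigenvectors rescaled by the square roots of the eigenvalues (for $i = n$ this is the single term $\dyad{w}$). Expanding the tensor product and using linearity of the trace, the right-hand side above becomes $\sum_{k_1, \dots, k_n} \sum_{\sigma \in S_n} f(\sigma)\, \tr(\sigma^{-1}\, \dyad{v_1^{(k_1)}} \ot \dots \ot \dyad{v_n^{(k_n)}})$. For each fixed multi-index, Lemma~\ref{prop:gmf_gram} identifies the inner sum with $d_f$ evaluated at the Gram matrix of $\ket{v_1^{(k_1)}}, \dots, \ket{v_n^{(k_n)}}$, which is non-negative by hypothesis; summing the non-negative terms over all multi-indices completes the argument.

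I do not expect a genuine obstacle: the content is essentially an unfolding of Lemma~\ref{prop:gmf_gram} and the identity $T_\sigma = \tr(\widetilde{T}_\sigma X_n)$. The one point requiring care is the bookkeeping around the last tensor slot — checking that replacing $X_n$ by an arbitrary rank-one projector is consistent with the convention that $n$ always sits at the end of the last cycle, so that $\widetilde{d}_f$ is indeed a function of $X_1, \dots, X_{n-1}$ alone and the identity $T_\sigma = \tr(\widetilde{T}_\sigma X_n)$ applies verbatim. A second, minor point is the Hermiticity of $\widetilde{d}_f(X_1, \dots, X_{n-1})$, which as noted is automatic once all its quadratic forms are known to be non-negative.
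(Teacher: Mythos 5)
Your proof is correct and follows essentially the same route as the paper's: reduce to the quadratic form $\bra{w}\widetilde{d}_f\ket{w}$, set $X_n=\dyad{w}$, decompose $X_1\ot\dots\ot X_n$ into rank-one terms, and apply Lemma~\ref{prop:gmf_gram} termwise. Your version merely makes two points explicit that the paper leaves implicit — the spectral decomposition realizing the convex combination of product states, and the observation that non-negativity of all quadratic forms over $\C$ already yields Hermiticity — both of which are correct.
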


\begin{proof}
Suppose the $X_i$'s have size $m \times m$.
Then 
$
  \widetilde{d}_f \geq 0 
$
if and only if 
$
 \bra{w} \widetilde{d}_f \ket{w} \geq 0
$
holds for all $\ket{w} \in \C^m$. We rewrite this as
\begin{align}\label{eq:gmf_rewrite}
 \bra{w} \widetilde{d}_f \ket{w} 
  &= \bra{w} \sum_{\sigma \in S_n} f(\sigma) \widetilde{T}_\sigma \ket{w} \nn\\
  &= \sum_{\sigma \in S_n} f(\sigma) T_\sigma \nn\\
  &= \sum_{\sigma \in S_n} f(\sigma) \tr(\sigma^{-1} X_1 \ot \dots \ot X_n)\,,
\end{align} 
where we have set $X_n = \dyad{w}$.

Now every such tensor product $X_1 \ot \dots \ot X_{n}$ where all $X_i$ are positive semidefinite can be decomposed 
as a convex combination of a finite set of rank one matrices of the form 
$\dyad{v_1} \ot \dots \ot \dyad{v_n}$. 
It thus suffices to show that Eq.~\eqref{eq:gmf_rewrite} holds for each term in the decomposition.
With Lemma~\ref{prop:gmf_gram} and Eq.~\eqref{eq:proj_irrep} we see that 
\begin{equation}
 \sum_{\sigma \in S_n} f(\sigma) \tr(\sigma^{-1} \dyad{v_1} \ot \dots \ot \dyad{v_n}) 
     =  d_f(A) \geq 0\nn\,,
\end{equation} 
where $A$ is the Gram matrix of the vectors $\ket{v_1}, \dots, \ket{v_n}$.

This ends the proof.
\end{proof}

In particular, if for some $\a, \b \in \R$ an inequality
$\a d_\chi^H(A) \geq \beta d^K_\psi(A)$ holds for all complex positive semidefinite $n \times n $ matrices $A$,
then 
$\a \widetilde{d}_\chi^H(X_1, \dots, X_{n-1}) \geq \beta \widetilde{d}^K_\psi(X_1, \dots, X_{n-1})$ for all $X_1, \dots, X_{n-1} \geq 0$ of 
equal size.

\begin{remark}
In Ref.~\cite{huber2020positive} the author has derived a special case of this inequality, namely when $\chi$ is a character of the symmetric group, 
in the framework of the Choi-Jamio\l kowski isomorphism.
\end{remark}

\begin{example}
Let $H = A_4$ be the alternating group of degree four.
The conjugacy class of $(123)$ in $S_4$ splits 
in $A_4$ into two: the conjugacy classes
\begin{align*}
 C_{A_4}((123)) &= \{(123), (142), (243), (134)\} \,,\\
 C_{A_4}((132)) &= \{(132), (124), (234), (143)\} \,.
\end{align*}
The conjugacy class of $(12)(34)$ does not split.
So one has the representatives
$(e)\,, (12)(34)\,, (123)\,, (132)$. 
Define $\omega = e^{2\pi i/3}$. The non-trivial characters of $A_4$ are 

\begin{table}[h]
\begin{tabular}{@{}ccccc@{}}
 \toprule 
          & $(e)$ & $(12)(34)$ & $(123)$      & $(132)$ \\
 \midrule
 $\chi_1$ & $3$  & $-1$   & $0$        & $0$   \\
 $\chi_2$ & $1$   & $1$   & $\omega$   & $\overbar{\omega}$ \\
 $\chi_3$ & $1$   & $1$   & $\overbar{\omega}$ & $\omega$ \\
 \bottomrule
 \end{tabular}
\end{table}
The element $\frac{\chi_i(e)}{|A_4|} \sum_{\sigma \in A_4} \chi_i(\sigma^{-1}) \sigma$ is idempotent and thus $d_{\chi_i}^{A_4}(A) \geq 0$ for all $A\geq 0$ as reasoned in Eq.~\eqref{eq:imm_ineq}.
The expressions are multilinear and it sufficices to consider matrices of trace one.
Then for square matrices $X,Y,Z$ define
\begin{align*}
 L &= \tr(XY)Z + \tr(XZ)Y + \tr(YZ)X\,,  \\
 M &= \tr(ZYX)\one + XY + YZ + ZX\,.
\end{align*}
The matrix functions $\widetilde{d}_{\chi_i}^{A_4} = \widetilde{d}_{\chi_i}^{A_4}(X,Y,Z)$ are then
\begin{align*}
\widetilde{d}_{\chi_1}^{A_4} &= 3\one - L\\
\widetilde{d}_{\chi_2}^{A_4} &=  \one + L + \omega M + \overbar{\omega} M^*  \\
\widetilde{d}_{\chi_3}^{A_4} &=  \one + L + \overbar{\omega} M + \omega M^* \,.
\end{align*}
where $M^*$ is the hermitian conjugate of $M$.
Theorem~\ref{thm:gmf} implies that for all $X,Y,Z\geq 0$ with $\tr(X)=\tr(Y)=\tr(Z)=1$ 
the above three expressions are positive semidefinite.
\end{example}

\begin{example}
 Set $f = \chi_2^{A_4}$. Again it suffices to consider matrices of trace one.
 It follows from Theorem~\ref{thm:gmf} and Watkins's theorem (see Appendix~\ref{app:ineqs}) that 
 \begin{equation*}
  (\omega - 1) M + (\overbar{\omega} - 1) M^* +  \tr(L)\one + X + Y + Z 
   + X\{Y,Z\} + Y\{Z,X\} + Z\{X,Y\} \geq 0
 \end{equation*} 
 holds for all $X,Y,Z \geq 0$ with $\tr(X)=\tr(Y)=\tr(Z)=1$.
 \end{example}

 \begin{example}
  In $\C S_3$ the centrally primitive idempotent associated to the partition $(2,1)$ reads 
  $p_{(2,1)} = \frac{1}{3}[2(e) - (123) - (132)]$.  
  Watkin's theorem applied to $p_{(1,2)}$ and the non-negativity of the determinant yields the following upper and lower 
  bounds on the anti-commutator of positive semidefinite matrices $X,Y$ of trace $1$:
  \begin{equation*}
   X + Y + [\tr(XY) -1] \one \leq XY + YX \leq \frac{2}{3}[X + Y + \tr(XY)]\one\,.
  \end{equation*}
 \end{example}

 \begin{remark}
 We note when $\widetilde{\imm}_\chi := \widetilde{d}_\chi^{S_n}(X_1, \dots, X_{n-1})$ becomes a matrix {\em identity}: 
 one has that $\widetilde{\imm}_\chi(X_1, \dots, X_{n-1}) = 0 $ for all complex $m\times m$ matrices $X_1, \dots, X_{n-1}$ 
 when $\chi$ corresponds to a Young diagram with more than $m$ rows. 
 This result is also known as Lew's polarized (or generalized) Cayley-Hamilton theorem~\cite{Lew1966} 
 and it can be understood as a consequence of  
 \begin{equation*}
   p_\chi \ket{v_1} \ot \dots \ot \ket{v_n} = 0\,.
 \end{equation*}
 whenever $\chi$ corresponds to a Young diagram with $r$ rows but 
 $\ket{v_1}, \dots, \ket{v_n} \in \C^m$ with $m<r$~\cite{huber2020positive}.
 \end{remark}

\appendix
\section{Some immanant inequalities}
\label{app:ineqs}

The following inequalities for the immanants of positive semidefinite matrices are 
well-known~\cite{CHEON2005314, grone1988, Merris_MultilinearAlgebra1997, Zhang2016}.
\smallskip

\noindent Hadamard inequality (1893): For all $A\geq 0$
 \begin{equation*} 
  \det(A) \leq \prod_i a_{ii} \,.
 \end{equation*}
 
\noindent  Schur inequality (1918):  For all $A\geq 0$
 \begin{equation*}
  \det(A) \leq \frac{\imm_\lambda(A)}{\chi_\lambda(e)} \,.
 \end{equation*}
 
\noindent Marcus inequality (1963): For all $A\geq 0$
 \begin{equation*}
  \per(A) \geq \prod_i a_{ii}\,.
 \end{equation*}
 
\noindent   Permanent dominance conjecture (open):
 For all $A\geq 0$
 \begin{equation*}
  \per(A) \geq \frac{\imm_\lambda(A)}{\chi_\lambda(e)} \,.
 \end{equation*}  
 
\noindent Heyfron's theorem (1988):
 Define the normalized immanant $\overbar{\imm}_\chi(A) = \imm_\chi(A) / \chi(e)$.
 For all $A\geq 0$
 \begin{align*}
  \det(A) = \overbar{\imm}_{[1^n]}(A) &\leq \overbar{\imm}_{[2, 1^{n-2}]}(A) \leq \overbar{\imm}_{[3, 1^{n-3}]}(A) \leq  \dots \\  
  &\dots \leq \overbar{\imm}_{[n-3, 1^3]}(A) \leq \overbar{\imm}_{[n-2, 1^2]}(A) \leq \overbar{\imm}_{[n]}(A) = \per(A)
 \end{align*} 
 where the dots consist of a chain of immanants corresponding to single-hook Young Tableaux 
 with increasing row and decreasing column lenghts.
 
\bigskip
\noindent  Watkins's theorem (1988):
 Suppose $f: S_n \to \C$ is such that $d_f(A) \geq 0$  for all complex semidefinite $m \times m$ matrices $A$. 
 Then it holds that $d_f(A) \geq f(e) \det(A)$
 for all complex semidefinite $m \times m$ matrices $A$.

\bibliographystyle{amsplain}
\bibliography{current_bib}
\end{document}